\newtheorem{thm}{Theorem}[section]
\newtheorem{defn}[thm]{Definition}
\newtheorem{lem}[thm]{Lemma}
\newtheorem{remark}[thm]{Remark}
\newcommand{\RR}{\mathbb{R}}      % for Real numbers
\newcommand{\lfl}{\left\lfloor }  % Left floor
\newcommand{\rfl}{\right\rfloor} % Right floor
\begin{document}

\title{A uniform estimate for rough paths}

\author{Terry J. Lyons\protect\footnote{Mathematical and Oxford-Man Institutes, University of Oxford, 24-29 St. Giles, Oxford, OX1 3LB. Email: terry.lyons@oxford-man.ox.ac.uk. } \and Weijun Xu\protect\footnote{Mathematical and Oxford-Man Institutes, University of Oxford, 24-29 St. Giles, Oxford, OX1 3LB. Email: xu@maths.ox.ac.uk. }}

\date{}

\maketitle

\abstract{It is well known that for two $p$-rough paths, if their first $\lfl p \rfl$ levels of interated integrals are close in $p$-variation sense, then all levels of their iterated integrals are close. In this paper, we prove that a similar result holds for the paths provided the first $\lfl p \rfl$ terms are close in a 'uniform' sense. The estimate is explicit, dimension free, and only involves the $p$-variation of two paths and the 'uniform' distance between the first $\lfl p \rfl$ terms. Applications include estimation of the difference of the signatures of two uniformly close paths (\cite{Lyons and Xu}), and convergence rates for Gaussian rough paths (\cite{Riedel and Xu}). 
}

\section{Introduction}

\subsection{Motivation}

The classical continuity theorem in rough paths (Theorem 2.2.2 in \cite{Lyons 1998}) states that if $X$ and $Y$ are two $p$-rough paths whose $p$-variation are both controlled by $\omega$ and such that 
\begin{align} \label{classical continuity condition}
\left\| X_{s,t}^{k} - Y_{s,t}^{k}  \right\| \leq \epsilon \frac{\omega(s,t)^{\frac{k}{p}}}{\beta (\frac{k}{p})!}, \qquad \forall k = 1, \cdots, \lfl p \rfl, 
\end{align}
then \eqref{classical continuity condition} holds for all $k \geq 1$. The proof is by an induction argument, which depends on the value of the exponent on the control, namely $\frac{k}{p}$. Although it is powerful in many places, there are certain problems for which we need a more convenient (and weaker) assumption. More precisely, we assume
\begin{align} \label{the general assumption}
\left\| X_{s,t}^{k} - Y_{s,t}^{k} \right\| < \epsilon \frac{\omega(s,t)^{\frac{k - \delta}{p}}}{\beta (\frac{k}{p})!}, \qquad \forall k = 1, \cdots, \lfl p \rfl. 
\end{align}
where $\delta \in [0,1]$. We wish to study whether similar estimates hold for $k \geq \lfl p \rfl + 1$. It is easy to see that the classical assumption \eqref{classical continuity condition} corresponds to $\delta = 0$. 

Such estimates are useful in a number of problems. For example, consider the following two linear differential equations
\begin{align} \label{the first equation}
dx_{t} = Ax_{t} d \gamma_{t}, \qquad x_{0} = a, 
\end{align}
and
\begin{align} \label{the second equation}
dy_{t} = Ax_{t} d \tilde{\gamma}_{t}, \qquad y_{0} = a, 
\end{align}
where $\gamma, \tilde{\gamma}: [0,1] \rightarrow \RR^{d}$ are two paths of bounded variations whose lengths are both controlled by $\omega$. Suppose further that
\begin{align} \label{uniformly close}
\sup_{t \in [0,1]} |\gamma_{t} - \tilde{\gamma}_{t}| < \epsilon, 
\end{align}
and one wishes to estimate the difference of the solution flow $|x_{t} - y_{t}|$. This question involves estimating the differences between all higher degrees of iterated integrals of $\gamma$ and $\tilde{\gamma}$, which are called signatures (we will give a precise definition in the next section). If we let $X$ and $Y$ to be the signatures of $\gamma$ and $\tilde{\gamma}$, then assumption \eqref{uniformly close} can be written as
\begin{align*}
\left\| X_{s,t}^{1} - Y_{s,t}^{1}  \right\| \leq 2 \epsilon = 2 \epsilon \omega(s,t)^{0}.  
\end{align*}
We see that it falls in the assumption \eqref{the general assumption} with $p = 1$ and $\delta = 1$. We will come back to this question at the end of this paper. 

Our estimates also apply to obtaining the convergence rates of Gaussian rough paths. For details, we refer to the recent works \cite{Friz and Riedel Gaussian rough paths} and \cite{Riedel and Xu}. 

\bigskip

\begin{flushleft}
\textit{Notation}. In what follows, $p$ will always be a number that is at least $1$. We use $\lfl p \rfl$ to denote the largest integer that does not exceed $p$, and let $\{p\} = p - \lfl p \rfl$ to be the fracal part of $p$. 
\end{flushleft}

\bigskip

\subsection{Main results}

Before stating our main result, let us explain briefly why the induction argument for the classical continuity theorem does not work directly here. As mentioned earlier, the induction argument depends on the exponent $\frac{n}{p}$. More precisely, the exponent for the level $n + 1 = \lfl p \rfl + 1$ is expected to be
\begin{align} \label{classical condition on the exponent}
\frac{\lfl p \rfl + 1}{p} > 1. 
\end{align}
This ensures that when one repeats Young's trick of dropping points, the total sum will converge. However, this condition is not satisfied in our problem \eqref{the general assumption} unless $\delta < 1 - \{p\}$. 

To this point, one may wonder whether one can immediately get the estimate by raising the control to an appropriate power so that the new control satisfies assumption \eqref{classical continuity condition}. Unfortunately this does not work, for there is no fixed power that one can do it in a homogeneous way for all $k \leq \lfl p \rfl$. Furthermore, the new control will in general fail to be superadditive. 

The idea is that we will compute one more term by hand, namely the level $\lfl p \rfl + 1$. After obtaining the estimate for this term, the exponent on the control will satisfy condition \eqref{classical condition on the exponent}, and we can use the usual induction argument for higher levels. 

In computing the estimate for the level $\lfl p \rfl + 1$, we again use Young's trick of dropping points. As mentioned earlier, the control in the 'uniform' distance assumption does not satisfy \eqref{classical condition on the exponent}, so in general the sum will not converge. Our idea is to move part of the exponent of $\epsilon$ to fill in the gap in the exponent in the control so that it reaches the necessary level for the sum to converge. Technicallly, it involves combining the 'uniform' distance together with the $p$-variation control for both paths, whose exponent for the level $\lfl p \rfl + 1$ is strictly greater than $1$. But since the latter does not involve the distance $\epsilon$, there is a trade-off between the convergence of the sum and the exponent for $\epsilon$ in higher degrees. In general, if $\delta > 1 - \{p\}$, then the exponent on $\epsilon$ for higher levels will be strictly less than one. We will make this precise in the proof of the main theorem. 

Below is the main theorem of this paper.

\bigskip

\begin{thm} \label{main theorem}
Let $p > 1$, and $\{p\} = p - \lfl p \rfl$ be the fractional part of $p$. Let $X,Y$ be two multiplicative functionals with finite $p$-variation which are both controlled by $\omega$ with the same constant $\beta$. That is to say: $\forall s,t \in [0,1]$ and $\forall k = 1, \cdots, \lfl p \rfl$, we have
\begin{align} \label{condition for the control}
\left\|  X_{s,t}^{k} \right\| \leq \frac{\omega(s,t)^{\frac{k}{p}}}{\beta (\frac{k}{p})!}, \qquad \left\|  Y_{s,t}^{k} \right\| \leq \frac{\omega(s,t)^{\frac{k}{p}}}{\beta (\frac{k}{p})!}. 
\end{align}
Suppose further that there exists an $\epsilon < 1$ such that $\forall k = 1, \cdots, \lfl p \rfl$, we have
\begin{align} \label{uniform distance assumption}
\left\|  X_{s,t}^{k} - Y_{s,t}^{k}  \right\| < \epsilon \cdot \frac{\omega(s,t)^{\frac{k - \delta}{p}}}{\beta (\frac{k}{p})!}, 
\end{align}
where $\delta \in [0,1]$. Then, the followings hold for all $(s,t) \in \Delta$: 
\begin{enumerate}
\item If $\delta \in [0, 1 - \{p\})$, and $\beta$ satisfies
\begin{align} \label{first condition}
\beta > p / [1 - (\frac{1}{2})^{\frac{1 - \{p\} - \delta}{p}}], 
\end{align}
then for all $k \geq 1$, we have
\begin{align} \label{first situation}
\left\| X_{s,t}^{k} - Y_{s,t}^{k} \right\| \leq \epsilon \cdot \frac{\omega(s,t)^{\frac{k-\delta}{p}}}{\beta (\frac{k}{p})!}. 
\end{align}

\item If $\delta = 1 - \{p\}$, and $\beta$ satisfies 
\begin{align} \label{second condition}
\beta > \frac{4p \cdot 2^{\frac{1-\{p\}}{p}}}{1 - (\frac{1}{2})^{\frac{1-\{p\}}{p}}}, 
\end{align}
then for all $k \geq \lfl p \rfl + 1$, we have
\begin{align} \label{second situation}
\left\| X_{s,t}^{k} - Y_{s,t}^{k}  \right\| < \epsilon \bigg(1 + \frac{p}{1-\{p\}} + \log_{2}\frac{\omega(0,1)}{\epsilon^{(1-\{p\})/p}} \bigg) \frac{\omega(s,t)^{\frac{k - 1 + \{p\}}{p}}}{\beta (\frac{k}{p})!}. 
\end{align}

\item If $p$ is non-integer, $\delta \in (1 - \{p\}, 1]$, and $\beta$ satisfies
\begin{align} \label{third condition}
\beta > 2p \bigg[ \frac{2^{(2p + \delta)/p}}{1 - (\frac{1}{2})^{(\delta - 1 + \{p\}) / p}} + \frac{1}{1 - (\frac{1}{2})^{(1 - \{p\})/p}} \bigg], 
\end{align}
then for all $k \geq \lfl p \rfl + 1$, we have
\begin{align} \label{third situation}
\left\| X_{s,t}^{k} - Y_{s,t}^{k}  \right\| < \epsilon^{\frac{1 - \{p\}}{\delta}} \cdot \frac{\omega(s,t)^{\frac{k - 1 + \{p\}}{p}}}{\beta (\frac{k}{p})!}. 
\end{align}

\end{enumerate}
\end{thm}

\bigskip

We see that the assumption becomes weaker as $\delta$ increases from $0$ to $1$. If $\delta < 1 - \{p\}$, then there is no essential differnce with the classical theorem, and the rate $\epsilon$ is maintained for all higher levels. On the other hand, if $\delta > 1 - \{p\}$, then we have a lost in the power of $\epsilon$ in higher levels. At the borderline case where $\delta = 1 -\{p\}$, we get a logorithmic correction.

\bigskip

\begin{remark}
The classical continuity theorem corresponds to the case $\delta = 0$. The case $p = 1$ and $\delta = 1$ was studied in \cite{Lyons and Xu}, and it was shown that the logarithmic correction can be removed in this situation. But the method there only works for $p = 1$, and does not generalize to $p > 1$. 
\end{remark}

\bigskip

\subsection{Structure of the paper}

This paper is organized as follows. In section 2, we introduce the concepts and notations from rough path theory that are necessary for our current problem. Sections 3 and 4 are devoted to the proof of the main theorem. In section 5, we come back to the example mentioned in the motivation, and explain how our estimates apply to this problem.

\bigskip

\textbf{Acknowledgements.} Both authors wish to acknowledge the support of the Oxford-Man Institute. The research of Terry Lyons is supported by EPSRC grants EP/F029678/1 and EP/H000100/1, and the European Research Council under the European Union's Seventh Framework Programme (FP7-IDEAS-ERC) / ERC grant agreement no. 291244. Weijun Xu thanks Peter Friz and Sebastian Riedel for helpful discussions during his visit to Berlin.

\bigskip

\section{Elements from rough path theory}

In this section, we introduce some concepts and notations from rough paths that are necessary for the current paper. For a detailed account of rough path theory, we refer the reader to \cite{Friz and Victoir book} and \cite{Lyons and Qian}. 

Fix the time interval $[0,1]$. For any $0 \leq s < t \leq 1$, write $\Delta_{s,t} = \{(u_{1}, u_{2}) | s < u_{1} < u_{2} < t\}$. In case $(s,t) = (0,1)$, we will simply write $\Delta = \Delta_{0,1}$. For every integer $N$, write
\begin{align*}
T^{N}(\RR^{d}) = \RR \oplus \RR^{d} \oplus \cdots \oplus (\RR^{d})^{\otimes N}. 
\end{align*}
If $\gamma: [0,1] \rightarrow \RR^{d}$ is a path of bounded variation, then the signature of $\gamma$ is defined by
\begin{align*}
X_{s,t}(\gamma) = 1 + X_{s,t}^{1} + \cdots + X_{s,t}^{n} + \cdots, 
\end{align*}
where
\begin{align*}
X_{s,t}^{n} = \int_{s < u_{1} < \cdots < u_{n} < t} d\gamma_{u_{1}} \otimes \cdots d\gamma_{u_{n}} \in T^{n}(\RR^{d}). 
\end{align*}
It is well known that $X$ is a multiplicative functional, in the sense that for any $s < u < t$, we have
\begin{align*}
X_{s,u} \otimes X_{u,t} = X_{s,t}. 
\end{align*}
If we restrict to the truncated tensor $(1, X_{s,t}^{1}, \cdots, X_{s,t}^{n})$, then it is a multiplicative functional in $T^{n}$.

\bigskip

\begin{defn}
A function $\omega: \Delta \rightarrow \RR^{+}$ is a control if it is continuous in both entries, vanishes on the diagonal and superadditive in the sense that for all $s < u < t$, we have
\begin{align*}
\omega(s,u) + \omega(u,t) \leq \omega(s,t). 
\end{align*}
\end{defn}

\bigskip

\begin{defn}
Let $X: \Delta \rightarrow T^{n}$ be a multiplicative functional. We say $X$ has finite $p$-variation controlled by $\omega$ with a constant $\beta$ if for all $0 < s < t < 1$ and all $k = 1, \cdots, n$, we have
\begin{align*}
\left\| X_{s,t}^{k} \right\| \leq \frac{\omega(s,t)^{\frac{k}{p}}}{\beta (\frac{k}{p})!}. 
\end{align*}
\end{defn}

\bigskip

A $p$-rough path is a multiplicative functional in $T^{\lfl p \rfl}$ with finite $p$-variation controlled by some $\omega$ with a constant $\beta$. Given a multiplicative functional in $T^{n}$, it is natural to ask whether it has a multiplicative extension to $T^{m}$ for $m > n$. The following (Theorem 2.2.1. in \cite{Lyons 1998}) answers it in affirmative provided it has finite $p$-variation controlled by some $\omega$.

\bigskip

\begin{thm} \label{Lyons extension theorem}
Suppose $X: \Delta \rightarrow T^{n}$ is a multiplicative functional in $T^{n}$ with finite $p$-variation controlled by $\omega$ with constant $\beta$, where $n \geq \lfl p \rfl$ and
\begin{align*}
\beta \geq \frac{p}{1 - (\frac{1}{2})^{\frac{\lfl p \rfl + 1}{p}-1}}, 
\end{align*}
then for any $m > n$, $X$ has a unique multiplicative extension to $T^{m}$ with finite $p$-variation controlled by $\omega$ with the same $\beta$. 
\end{thm}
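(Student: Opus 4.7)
The plan is to induct on the level and reduce to the case $m = n+1$: given a multiplicative $X : \Delta \to T^n$ with $n \geq \lfl p \rfl$ satisfying the hypotheses, I construct a unique $X_{s,t}^{n+1}$ that makes the augmented functional on $T^{n+1}$ multiplicative and retains the same $p$-variation bound $\|X_{s,t}^{n+1}\| \leq \omega(s,t)^{(n+1)/p}/[\beta\,((n+1)/p)!]$. Iterating then yields the extension to any $T^m$, since the augmented functional still satisfies the theorem's hypotheses (same $\omega$, same $\beta$, and $n+1 \geq \lfl p \rfl$).

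For existence, I approximate $X_{s,t}^{n+1}$ by partition products: given a partition $D = \{s = t_0 < \cdots < t_N = t\}$, embed each $X_{t_{i-1}, t_i}$ into $T^{n+1}$ by setting its level-$(n+1)$ entry to zero, form $\tilde{X}_{s,t}^D := \bigotimes_i X_{t_{i-1}, t_i}$, and let $X_{s,t}^{n+1, D} := \pi_{n+1}(\tilde{X}_{s,t}^D)$. Multiplicativity of $X$ at all levels $\leq n$ implies that inserting a midpoint $u$ into an interval $[a,b]$ changes the level-$(n+1)$ entry by the local cross-term
$$\pi_{n+1}\bigl(X_{a,u} \otimes X_{u,b}\bigr) = \sum_{i=1}^{n} X_{a,u}^i \otimes X_{u,b}^{n+1-i},$$
whose norm is controlled via the neoclassical inequality
$$\sum_{i=1}^{n} \frac{\omega(a,u)^{i/p}}{(i/p)!} \cdot \frac{\omega(u,b)^{(n+1-i)/p}}{((n+1-i)/p)!} \leq p \cdot \frac{\omega(a,b)^{(n+1)/p}}{((n+1)/p)!},$$
giving a clean per-insertion bound of order $p\,\omega(a,b)^{(n+1)/p}/[\beta^2\,((n+1)/p)!]$.

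I then refine dyadically, $\omega$-bisecting every interval at each stage (possible by continuity of $\omega$). Telescoping the cross-term estimate over the intervals of $D_k$ and using $\omega(I) \leq \omega(s,t)/2^k$, the total change from $D_k$ to $D_{k+1}$ decays geometrically with ratio $2^{-((n+1)/p - 1)}$, which is strictly less than $1$ precisely because $n+1 \geq \lfl p \rfl + 1 > p$. Summing the geometric series yields the factor $1/(1 - 2^{-((n+1)/p - 1)})$ appearing in the hypothesis on $\beta$; a Cauchy argument then produces a well-defined limit $X_{s,t}^{n+1}$ satisfying the required bound—this is exactly where the explicit lower bound on $\beta$ (sharpest when $n+1 = \lfl p \rfl + 1$) is consumed. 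Independence of the refinement and multiplicativity $X_{s,u} \otimes X_{u,t} = X_{s,t}$ at the new level then follow by standard limiting arguments applied to partitions containing the relevant intermediate points.

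Uniqueness is clean: two multiplicative extensions $X^{n+1}$ and $\hat{X}^{n+1}$ have a difference $Z_{s,t}$ that is additive in $(s,u,t)$, because the bilinear cross-terms in the multiplicativity relation at level $n+1$ involve only the lower levels where $X$ and $\hat{X}$ already coincide, and $Z$ is bounded by $2\omega(s,t)^{(n+1)/p}/[\beta\,((n+1)/p)!]$; an additive function dominated by a super-additive majorant raised to a power exceeding $1$ vanishes identically (split $[s,t]$ into $N$ pieces of $\omega$-measure $\omega(s,t)/N$ and let $N \to \infty$). The main obstacle I expect is the constant-tracking: recovering the precise form $\omega^{(n+1)/p}/[\beta\,((n+1)/p)!]$ with the \emph{same} $\beta$ as the input requires the neoclassical inequality with its sharp constant $p$ to align the factorial denominators with the geometric-series sum, since any cruder Hölder-type local estimate would introduce extra combinatorial factors that break the self-consistency needed to iterate the argument up to arbitrary $T^m$.
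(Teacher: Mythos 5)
Your construction is essentially the one the paper itself relies on (its Lemma 3.2, quoted from Lyons 1998, plus the dyadic machinery of Section 3): partition products with a zero top level, insertion/removal cross-terms of the form $\sum_{i=1}^{n} X_{a,u}^{i}\otimes X_{u,b}^{n+1-i}$ controlled by the neoclassical inequality, dyadic $\omega$-bisection giving a geometric series with ratio $2^{-((n+1)/p-1)}<1$ that exactly consumes the stated lower bound on $\beta$ (sharpest at $n+1=\lfl p\rfl+1$), and uniqueness via additivity of the difference dominated by a superadditive control raised to a power exceeding one. The only compressed step is passing from convergence along your chosen dyadic refinements to a limit that is independent of the partition family and multiplicative -- in Lyons's proof this is where the Young-type argument of removing a well-chosen point from an \emph{arbitrary} partition enters -- but this is the standard route, and your constant tracking and the rest of the sketch are correct.
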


\bigskip

\section{Some preliminary lemmas}

We first introduce some notations. Let $X_{s,t} = (1, X_{s,t}^{1}, \cdots, X_{s,t}^{n}) \in T^{(n)}$ be a multiplicative functional with finite $p$-variation controlled by $\omega$, where $n \geq \lfl p \rfl$. Define
\begin{align*}
\hat{X}_{s,t} = (1, X_{s,t}^{1}, \cdots, X_{s,t}^{n}, 0) \in T^{(n+1)}. 
\end{align*}
For any partition $\mathcal{P} = \{s = u_{0} < u_{1} < \cdots < u_{N-1} < u_{N} = t\}$, define
\begin{align*}
\hat{X}_{s,t}^{\mathcal{P}}:= \hat{X}_{s,u_{1}} \otimes \cdots \otimes \hat{X}_{u_{N-1}, t} \in T^{(n+1)}. 
\end{align*}
The following lemma gives a construction of the unique multiplicative extension of $X$ to higher degrees. It was proved in Theorem 2.2.1. in \cite{Lyons 1998}.

\bigskip

\begin{lem} \label{construction of multiplicative extension}
Let $X = (1, X_{s,t}^{1}, \cdots, X_{s,t}^{n})$ be a multiplicative functional in $T^{(n)}$. Let $\mathcal{P} = \{s = u_{0} < \cdots < u_{N} = t\}$ be any parition of $(s,t)$, and $\mathcal{P}^{j}$ be the partition of $(s,t)$ obtained by removing $u_{j}$ from $\mathcal{P}$. Then, 
\begin{align*}
\hat{X}_{s,t}^{\mathcal{P}} - \hat{X}_{s,t}^{\mathcal{P}^{j}} = (0, \cdots, 0, \sum_{k=1}^{n} X_{u_{j-1}, u_{j}}^{k} \otimes X_{u_{j}, u_{j+1}}^{n+1-k}) \in T^{(n+1)}. 
\end{align*}
Suppose further that $X$ has finite $p$-variation controlled by $\omega$, and $n \geq \lfl p \rfl$. Then, the limit
\begin{align*}
\lim_{\left\| \mathcal{P}  \right\| \rightarrow 0} \hat{X}_{s,t}^{\mathcal{P}} \in T^{(n+1)}
\end{align*}
exists. Furthermore, it is the unique multiplicative extension of $X$ to $T^{(n+1)}$, and is also controlled by $\omega$. 
\end{lem}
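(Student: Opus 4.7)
The plan is to handle the three assertions in turn. For the algebraic identity, I would expand both $\hat{X}_{s,t}^{\mathcal{P}}$ and $\hat{X}_{s,t}^{\mathcal{P}^{j}}$ as iterated tensor products and observe that they share every factor except at the $j$-th slot: one keeps $\hat{X}_{u_{j-1}, u_{j}} \otimes \hat{X}_{u_{j}, u_{j+1}}$ while the other has $\hat{X}_{u_{j-1}, u_{j+1}}$. Since $X$ is multiplicative in $T^{(n)}$, these two agree in every degree up to $n$, so the whole difference lives in degree $n+1$. At that degree, $\hat{X}_{u_{j-1}, u_{j+1}}$ contributes $0$ by definition, while $\hat{X}_{u_{j-1}, u_{j}} \otimes \hat{X}_{u_{j}, u_{j+1}}$ contributes $\sum_{k=1}^{n} X_{u_{j-1}, u_{j}}^{k} \otimes X_{u_{j}, u_{j+1}}^{n+1-k}$ (the $k=0$ and $k=n+1$ terms drop out because the top components of $\hat{X}$ are zero). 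Since the surrounding factors are unchanged, this local difference is preserved under the remaining tensor products.

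For existence of the limit I would use Young's point-dropping trick. For a partition $\mathcal{P}$ with $N+1$ points, superadditivity of $\omega$ implies there is an interior index $j$ with $\omega(u_{j-1}, u_{j+1}) \leq \frac{2\,\omega(s,t)}{N-1}$. Dropping this point and applying part (1) together with the control gives
\begin{align*}
\bigl\| \hat{X}_{s,t}^{\mathcal{P}} - \hat{X}_{s,t}^{\mathcal{P}^{j}} \bigr\| \leq \sum_{k=1}^{n} \frac{\omega(u_{j-1}, u_{j})^{k/p}}{\beta (k/p)!} \cdot \frac{\omega(u_{j}, u_{j+1})^{(n+1-k)/p}}{\beta ((n+1-k)/p)!} \leq C_{p,n}\, \frac{\omega(u_{j-1}, u_{j+1})^{(n+1)/p}}{\beta^{2}\,((n+1)/p)!},
\end{align*}
where the last inequality is the neo-classical estimate. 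Because $n \geq \lfl p \rfl$, the exponent $(n+1)/p$ is strictly greater than $1$, so telescoping this drop from the arbitrary partition down to the two-point partition yields a bound by a convergent series $\sum_{N \geq 2} (N-1)^{-(n+1)/p}$ times $\omega(s,t)^{(n+1)/p}/\beta$. The same computation shows $\hat{X}_{s,t}^{\mathcal{P}}$ is Cauchy as $\|\mathcal{P}\| \to 0$, so the limit exists, and the degree-$(n+1)$ component inherits the bound $\omega(s,t)^{(n+1)/p}/(\beta ((n+1)/p)!)$ once $\beta$ exceeds the threshold stated in Theorem~\ref{Lyons extension theorem}.

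To see that the limit is multiplicative, for $s<u<t$ take partitions $\mathcal{P}_{1}, \mathcal{P}_{2}$ of $(s,u)$ and $(u,t)$ respectively; the concatenation $\mathcal{P}_{1} \cup \mathcal{P}_{2}$ satisfies $\hat{X}_{s,t}^{\mathcal{P}_{1} \cup \mathcal{P}_{2}} = \hat{X}_{s,u}^{\mathcal{P}_{1}} \otimes \hat{X}_{u,t}^{\mathcal{P}_{2}}$, and letting both meshes tend to zero yields $X_{s,t} = X_{s,u} \otimes X_{u,t}$ in $T^{(n+1)}$. For uniqueness, any other multiplicative extension differs only in degree $n+1$ by a function $f(s,t)$ that is additive under $s<u<t$; the $p$-variation control forces $|f(s,t)| \leq C\,\omega(s,t)^{(n+1)/p}$ with $(n+1)/p>1$, and an additive function with such super-$1$-variation growth must vanish identically. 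The main technical point is the bookkeeping of the constant $C_{p,n}$ in the neo-classical step so as to recover the same $\beta$ for the extension; this is precisely what fixes the threshold in Theorem~\ref{Lyons extension theorem}, and I would rely on that estimate directly rather than redo it here.
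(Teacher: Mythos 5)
The paper does not prove this lemma; it cites Theorem 2.2.1 of Lyons (1998). Your reconstruction follows that classical argument: the local degree-$(n+1)$ computation, Young's point-dropping via superadditivity of $\omega$ and the neo-classical inequality, telescoping to control $\|\hat{X}_{s,t}^{\mathcal{P}} - \hat{X}_{s,t}^{\{s,t\}}\|$, and for uniqueness the observation that two multiplicative extensions agreeing up to degree $n$ differ in degree $n+1$ by an additive function of $(s,t)$ with super-$1$-variation growth, which must vanish. The algebraic identity is derived correctly, including the point that the local difference lives only in degree $n+1$ and therefore passes unchanged through the remaining tensor factors.

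One spot needs tightening: the assertion that \emph{the same computation shows $\hat{X}_{s,t}^{\mathcal{P}}$ is Cauchy as $\|\mathcal{P}\|\to 0$}. The telescoping bound you actually established controls $\|\hat{X}_{s,t}^{\mathcal{P}} - \hat{X}_{s,t}^{\{s,t\}}\|$ uniformly in $\mathcal{P}$, which gives boundedness but not oscillation. To get Cauchy, take a refinement $\mathcal{P}' \supset \mathcal{P}$ and telescope $\hat{X}_{s,t}^{\mathcal{P}'} - \hat{X}_{s,t}^{\mathcal{P}}$ over the subintervals $[u_{i-1},u_i]$ of $\mathcal{P}$; because each local difference $\hat{X}_{u_{i-1},u_i}^{\mathcal{P}'\cap[u_{i-1},u_i]} - \hat{X}_{u_{i-1},u_i}$ sits in degree $n+1$ alone, the degree-$(n+1)$ component of the whole difference is just $\sum_i (\hat{X}_{u_{i-1},u_i}^{\mathcal{P}'\cap[u_{i-1},u_i]} - \hat{X}_{u_{i-1},u_i})^{n+1}$, each term bounded by your maximal estimate $\leq C\,\omega(u_{i-1},u_i)^{(n+1)/p}$, and the sum is $\leq C\,(\max_i \omega(u_{i-1},u_i))^{(n+1)/p - 1}\,\omega(s,t)$, which tends to $0$ as the mesh of $\mathcal{P}$ shrinks since $(n+1)/p > 1$. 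With that point spelled out, your proof is correct and matches the argument the paper is invoking from the reference.
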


\bigskip

\begin{remark}
In the proof of the classical extension and continuity theorem (Theorems 2.2.1 and 2.2.2 in \cite{Lyons 1998}), one constructs multiplicative functionals of higher levels from those in lower levels in the sense of Lemma \ref{construction of multiplicative extension}, since the limit exists as long as the mesh of the partitions tends to $0$. On the other hand, given that the limit in Lemma \ref{construction of multiplicative extension} exists, we can choose a particular sequence of partitions which is convenient for calculation, and yields an improved constant $\beta$. 
\end{remark}

\bigskip

\begin{defn}
A partition $\mathcal{P}$ is a $K$-dyadic partition of $(s,t)$ with respect to the control $\omega$ if
\begin{align*}
\mathcal{P} = \{s = u_{0} < u_{1} < \cdots < u_{2^{K}-1} < u_{2^{K}} = t \}, 
\end{align*}
and for all $j = 1, 3, 5, \cdots, 2^{K}-1$, we have
\begin{align*}
\omega (u_{j-1}, u_{j}) = \omega (u_{j}, u_{j+1}). 
\end{align*}
\end{defn}

\bigskip

\begin{defn}
A partition $\mathcal{P}$ is a total $K$-dyadic partition of $[s,t]$ with respect to the control $\omega$ if for each $m = 0, 1, 2, \cdots, K$, the subpartition
\begin{align*}
\mathcal{P}_{K-m} = \{s = u_{0} < u_{2^{m}} < u_{2^{m} \cdot 2} < \cdots < u_{2^{m} \cdot 2^{K-m}} = t\}
\end{align*}
is a $(K-m)$-dyadic partition of $[s,t]$ with respect to $\omega$. 
\end{defn}

\bigskip

\begin{lem}
Let $\omega: \Delta \rightarrow \RR^{+}$ be a control. For any interval $[s,t]$, for each integer $K$, there exists a unique total $K$-dyadic partition $\mathcal{P}_{K}$ of $[s,t]$ with respect to $\omega$. Furthermore, $\mathcal{P}_{K+1}$ can be obtained from $\mathcal{P}_{K}$ by inserting one single point between every two consecutive points in $\mathcal{P}_{K}$ in a unique manner. 
\end{lem}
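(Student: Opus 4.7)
I would argue by induction on $K$, with the base case $K=0$ being trivial since $\mathcal{P}_0 = \{s,t\}$ is the only partition with $2^0 = 1$ subinterval. The heart of the argument is a single-point splitting lemma: given any interval $[a,b] \subset [0,1]$, there is a unique $c \in [a,b]$ with $\omega(a,c) = \omega(c,b)$. Taking this for granted, the inductive step is straightforward: given a total $K$-dyadic partition $\mathcal{P}_K = \{s = u_0 < \cdots < u_{2^K} = t\}$, apply the splitting lemma to each subinterval $[u_{j-1}, u_j]$ to obtain a unique midpoint $v_j$, and set $\mathcal{P}_{K+1} = \mathcal{P}_K \cup \{v_j : 1 \le j \le 2^K\}$. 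The claim that $\mathcal{P}_{K+1}$ is obtained from $\mathcal{P}_K$ by inserting exactly one point per subinterval, in a unique manner, is then immediate.

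For the single-point splitting lemma, define $f(u) = \omega(a,u) - \omega(u,b)$ on $[a,b]$. Continuity of $\omega$ in both arguments makes $f$ continuous, and $f(a) = -\omega(a,b) \le 0$, $f(b) = \omega(a,b) \ge 0$, so existence of $c$ follows from the intermediate value theorem. For uniqueness, suppose $a \le c_1 < c_2 \le b$ both satisfy $\omega(a,c_i) = \omega(c_i,b)$. Superadditivity gives
\begin{align*}
\omega(a,c_2) \ge \omega(a,c_1) + \omega(c_1,c_2), \qquad \omega(c_1,b) \ge \omega(c_1,c_2) + \omega(c_2,b),
\end{align*}
and adding these and substituting the defining equalities $\omega(a,c_2) = \omega(c_2,b)$ and $\omega(c_1,b) = \omega(a,c_1)$ collapses the inequality to $\omega(c_1,c_2) \le 0$. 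Hence $\omega$ vanishes on $[c_1,c_2]$; in this degenerate case any choice of splitting point in $[c_1,c_2]$ is $\omega$-equivalent, and we interpret uniqueness modulo these $\omega$-null intervals (the subsequent analysis only sees $\omega$-values).

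To complete the induction I would verify the total dyadic property by checking, for each $m \in \{0,1,\ldots,K+1\}$, that the subpartition $\mathcal{P}_{(K+1)-m}$ of $\mathcal{P}_{K+1}$ built from every $2^m$-th point is a $((K+1)-m)$-dyadic partition. For $m \ge 1$ this subpartition coincides with $\mathcal{P}_{K-(m-1)}$ extracted from $\mathcal{P}_K$, which is $(K-(m-1))$-dyadic by the induction hypothesis. For $m = 0$, the dyadic condition $\omega(u_{j-1}, v_j) = \omega(v_j, u_j)$ on each new pair of subintervals is exactly the defining property of $v_j$.

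The main obstacle, as hinted above, is the uniqueness statement in the presence of possible degeneracies of $\omega$: if $\omega$ is constant on some subinterval, the midpoint is not literally unique as a real number, but the superadditivity computation shows this non-uniqueness is harmless, since all choices produce the same $\omega$-decomposition and hence the same partition for the purposes of all later estimates in the paper.
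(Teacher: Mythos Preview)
Your approach is essentially the paper's: induction on $K$, with each step obtained by inserting a unique $\omega$-midpoint into every subinterval of $\mathcal{P}_K$. The paper's proof is in fact terser and somewhat looser than yours: for uniqueness of the midpoint it simply asserts that $\omega$ is ``strictly monotone in both variables,'' a property not actually contained in the paper's own Definition of a control (which only requires continuity, vanishing on the diagonal, and superadditivity). Your superadditivity computation showing $\omega(c_1,c_2)=0$ when two midpoints exist is the honest way to close that gap, and your remark that non-uniqueness can only occur on $\omega$-null intervals---hence is invisible to all subsequent estimates---is the right resolution. You also explicitly verify the \emph{total} dyadic property of $\mathcal{P}_{K+1}$, which the paper leaves implicit. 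So your argument follows the same route but is strictly more complete.
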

\begin{proof}
Since $\omega$ is continuous and strictly monotone in both variables, there exists a unique point $u \in (s,t)$ such that
\begin{align*}
\omega(s,u) = \omega (u,t). 
\end{align*}
Thus, $\mathcal{P}_{1} = \{s < u < t\}$ is the unique total $1$-dyadic partition. Suppose 
\begin{align*}
\mathcal{P}_{K} = \{s = u_{0} < u_{1} < \cdots < u_{2^{K}-1} < u_{2^{K}} = t  \}
\end{align*}
is the unique total $K$-dyadic partition of $[s,t]$ with respect to $\omega$. Then, for every $u_{j} < u_{j+1} \in \mathcal{P}_{K}$, there exists a unique point $v_{j+1} \in (u_{j}, u_{j+1})$ such that
\begin{align*}
\omega (u_{j}, v_{j+1}) = \omega (v_{j+1}, u_{j+1}). 
\end{align*}
Thus, 
\begin{align*}
\mathcal{P}_{K+1} = \{ s = u_{0} < v_{1} < u_{1} < \cdots < u_{2^{K} - 1} < v_{2^{K}} < u_{2^{K}} = t\}
\end{align*}
is the desired unique total dyadic-$K$ partition of $(s,t)$ with respect to $\omega$. 
\end{proof}

\bigskip

The next lemma is crucial for our estimates. It was first proved in \cite{Lyons 1998} with a constant $\frac{1}{p^{2}}$ on the left hand side. Recently, Hara and Hino improved it to $\frac{1}{p}$ in \cite{Hara and Hino}.

\bigskip

\begin{lem}
(Neo-classical inequality) Let $p \geq 1$, and define $x!:= \Gamma(x+1)$. Then for any $x,y \in \RR$, we have
\begin{align*}
\frac{1}{p}\sum_{k=0}^{n}\frac{x^{\frac{k}{p}}}{(\frac{k}{p})!} \cdot \frac{y^{\frac{n-k}{p}}}{(\frac{n-k}{p})!} \leq \frac{(x+y)^{\frac{n}{p}}}{(\frac{n}{p})!}. 
\end{align*}
\end{lem}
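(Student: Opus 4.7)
The plan is to pass from the discrete fractional sum to a pointwise bound on a continuous profile, via the Beta-integral identity
$$\int_0^{x+y} s^{k/p}(x+y-s)^{(n-k)/p}\,ds \;=\; \frac{\Gamma(\tfrac{k}{p}+1)\Gamma(\tfrac{n-k}{p}+1)}{\Gamma(\tfrac{n}{p}+2)}\,(x+y)^{\frac{n}{p}+1}.$$
Writing $t = x+y$ and defining
$$f(s) \;:=\; \sum_{k=0}^{n}\frac{s^{k/p}(t-s)^{(n-k)/p}}{\Gamma(\tfrac{k}{p}+1)\Gamma(\tfrac{n-k}{p}+1)}, \qquad s \in [0,t],$$
and summing the identity over $k$, one finds $\int_0^t f(s)\,ds = (n+1)\,t^{n/p+1}/\Gamma(n/p+2)$. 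Thus the mean value of $f$ on $[0,t]$ equals $\tfrac{p(n+1)}{n+p}\cdot\tfrac{t^{n/p}}{\Gamma(n/p+1)}$, which is strictly less than $\tfrac{p\,t^{n/p}}{\Gamma(n/p+1)}$ for every $p\ge 1$. The neo-classical inequality is exactly the pointwise estimate $f(s)\le \tfrac{p\,t^{n/p}}{\Gamma(n/p+1)}$, so the task is to upgrade the mean-value computation to a uniform bound.

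I would first dispatch the base case $p=1$: the binomial theorem gives $f\equiv t^n/n!$ and the claim holds with equality. For $p>1$, symmetry of $f$ about $s=t/2$ together with the boundary values $f(0)=f(t)=t^{n/p}/\Gamma(n/p+1)$ (sitting at exactly $1/p$ of the target) suggest that $f$ attains its maximum at the midpoint. I would verify unimodality by analysing the sign of $f'(s)$ for $s\in(0,t/2)$, then estimate $f(t/2)$ directly. Factoring out $(t/2)^{n/p}$ reduces the claim to the Gamma-theoretic sum bound
$$\sum_{k=0}^{n}\frac{1}{\Gamma(\tfrac{k}{p}+1)\Gamma(\tfrac{n-k}{p}+1)} \;\le\; \frac{p\cdot 2^{n/p}}{\Gamma(\tfrac{n}{p}+1)},$$
which I would attack via Stirling-type asymptotics for the fractional binomial coefficients $\Gamma(n/p+2)/[\Gamma(k/p+1)\Gamma((n-k)/p+1)]$.

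The main obstacle is securing the sharp constant $1/p$ rather than the weaker $1/p^2$ produced by the original induction on $n$ in \cite{Lyons 1998}, which loses an extra factor of $p$ at each level. Obtaining $1/p$ (the Hara and Hino improvement in \cite{Hara and Hino}) requires a delicate accounting of how the Gamma-ratio weights concentrate near $k\approx n/2$, quantitatively matching the $2^{n/p}$ growth produced by the midpoint estimate. This concentration analysis, and the algebraic manipulation needed to make the sum telescope or integrate into a closed form, is the technically intricate step where the sharp constant is won or lost.
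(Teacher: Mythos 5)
The paper does not give a proof of this lemma; it is quoted as a known result, attributed to \cite{Lyons 1998} for the constant $1/p^2$ and to \cite{Hara and Hino} for the sharp constant $1/p$. There is therefore no in-paper argument to compare against, and your sketch must stand on its own.

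Your preliminary observations are all correct: the Beta-integral identity, the value $\int_0^t f(s)\,ds = (n+1)\,t^{n/p+1}/\Gamma(n/p+2)$ and the resulting mean $\frac{p(n+1)}{n+p}\cdot\frac{t^{n/p}}{\Gamma(n/p+1)}$, the symmetry of $f$ about $s=t/2$, and the boundary values $f(0)=f(t)=t^{n/p}/\Gamma(n/p+1)$. However, the two steps you defer are where all the difficulty lives, and neither will go through as outlined. Unimodality of $f$ is genuinely nontrivial: each summand $s^{k/p}(t-s)^{(n-k)/p}$ is a bump with its mode at $s=\frac{k}{n}t$, sums of such bumps need not be unimodal, and at $p=1$ the function $f$ is exactly constant, so for $p$ near $1$ you are asserting unimodality for a small perturbation of a constant -- this demands a real quantitative argument rather than a routine sign check. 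More seriously, even granting unimodality, the midpoint estimate
\begin{align*}
\sum_{k=0}^{n}\frac{1}{\Gamma(\tfrac{k}{p}+1)\,\Gamma(\tfrac{n-k}{p}+1)} \;\leq\; \frac{p\cdot 2^{n/p}}{\Gamma(\tfrac{n}{p}+1)}
\end{align*}
is precisely the neo-classical inequality specialized to $x=y$, so the reduction (valid if unimodality holds) merely identifies the worst case without making it any easier. Stirling asymptotics cannot finish it: your own mean-value computation shows $\frac{p(n+1)}{n+p}\to p$ as $n\to\infty$, so the claimed constant is approached in the limit, and any Stirling-type bound carrying an uncontrolled multiplicative error factor $1+o(1)$ leaves no room; a non-asymptotic exact argument for the case $x=y$ is required. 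Note that the Hara--Hino proof proceeds by quite different means (a fractional-order Taylor-series/integral-representation argument rather than a pointwise maximization), and it would be worth consulting before pursuing this route.
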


\bigskip

\begin{lem} \label{main lemma}
Let $X,Y$ be two multiplicative functionals with finite $p$-variation both controlled by $\omega$ with the same constant $\beta$. Suppose further that there is an $\epsilon < 1$ such that
\begin{align*}
\left\| X_{s,t}^{k} - Y_{s,t}^{k} \right\| < \epsilon \cdot \frac{\omega(s,t)^{\frac{k - \delta}{p}}}{\beta (\frac{k}{p})!}
\end{align*}
for each $k = 1, 2, \cdots, n$, where $n \geq \lfl p \rfl$, and $\delta \in [0,1]$. Then, we have
\begin{align*}
&\phantom{1}\left\|  (\hat{X}_{s,t}^{\mathcal{P}_{K+1}} - \hat{Y}_{s,t}^{\mathcal{P}_{K+1}})^{n+1} \right\| - \left\| (\hat{X}_{s,t}^{\mathcal{P}_{K}} - \hat{Y}_{s,t}^{\mathcal{P}_{K}})^{n+1} \right\| \\
&\leq \min \bigg\{  \frac{\epsilon p}{\beta^{2} (\frac{n+1}{p})!} \cdot 2^{\frac{2p+\delta}{p}} \big(\frac{1}{2^{K}} \big)^{\frac{n+1-p-\delta}{p}} \omega(s,t)^{\frac{n+1-\delta}{p}}, \phantom{11}  \big( \frac{1}{2^{K}} \big)^{\frac{n+1}{p}-1} \frac{2p \omega(s,t)^{\frac{n+1}{p}}}{\beta^{2} (\frac{n+1}{p})!} \bigg\}.  
\end{align*}
for all $K = 0, 1, 2, \cdots$. 
\end{lem}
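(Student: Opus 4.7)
The plan is to reduce the estimate to a direct calculation by iterating Lemma~\ref{construction of multiplicative extension}. Write $\mathcal{P}_K = \{s = v_0 < v_1 < \cdots < v_{2^K} = t\}$ and let $w_j$ be the unique dyadic midpoint between $v_{j-1}$ and $v_j$ sitting inside $\mathcal{P}_{K+1}$. Since the inserted points $w_1, \ldots, w_{2^K}$ are pairwise non-adjacent in $\mathcal{P}_{K+1}$, we may remove them one at a time from $\mathcal{P}_{K+1}$ to reach $\mathcal{P}_K$, and apply Lemma~\ref{construction of multiplicative extension} to each removal. Telescoping gives
\begin{align*}
\bigl[\hat{X}^{\mathcal{P}_{K+1}}_{s,t} - \hat{X}^{\mathcal{P}_K}_{s,t}\bigr]^{n+1} = \sum_{j=1}^{2^K} \sum_{k=1}^{n} X^k_{v_{j-1}, w_j} \otimes X^{n+1-k}_{w_j, v_j},
\end{align*}
and an analogous identity for $Y$; at levels $\leq n$ the two partition sums already agree by multiplicativity of $X$ and $Y$ in $T^{(n)}$, so only the level $(n+1)$ contributes. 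Subtracting and applying the algebraic decomposition $a \otimes b - c \otimes d = (a - c) \otimes b + c \otimes (b - d)$ to each $(j,k)$-summand splits the expression into two classes of terms: those carrying a factor $X^k - Y^k$ and those carrying a factor $X^{n+1-k} - Y^{n+1-k}$.

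For the first ($\epsilon$-dependent) bound in the minimum, I would apply the hypothesis on $\|X^k - Y^k\|$ to the difference factor and the $\omega$-control to the remaining factor. The defining dyadic property $\omega(v_{j-1}, w_j) = \omega(w_j, v_j) =: \omega_j$ combines the two powers of $\omega$ into $\omega_j^{(n+1-\delta)/p}$. The sum $\sum_{k=1}^{n} \frac{1}{(k/p)!\,((n+1-k)/p)!}$ is then controlled by the neo-classical inequality (applied with $x = y = 1$) by $\frac{p \cdot 2^{(n+1)/p}}{((n+1)/p)!}$. For the sum over $j$, note that each $\omega_j$ is the $\omega$-length of one of the $2^{K+1}$ finest intervals in the total $(K+1)$-dyadic partition, so by the dyadic-superadditivity argument $\omega_j \leq \omega(s,t)/2^{K+1}$, hence $\sum_{j=1}^{2^K} \omega_j^{(n+1-\delta)/p} \leq 2^{K}\bigl(\omega(s,t)/2^{K+1}\bigr)^{(n+1-\delta)/p}$. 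Collecting powers of $2$ yields the first claimed bound (with a factor of $2$ to spare).

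The second bound in the minimum, which does not involve $\epsilon$, is obtained more simply via $\|X^k \otimes X^{n+1-k} - Y^k \otimes Y^{n+1-k}\| \leq \|X^k \otimes X^{n+1-k}\| + \|Y^k \otimes Y^{n+1-k}\|$ and the $\omega$-control applied directly to both terms, followed by the same neo-classical and dyadic summation (effectively setting $\delta = 0$). Finally, the lemma's statement is about a difference of norms, which follows from the reverse triangle inequality $\bigl|\|A\| - \|B\|\bigr| \leq \|A - B\|$ applied to $A = (\hat{X}^{\mathcal{P}_{K+1}} - \hat{Y}^{\mathcal{P}_{K+1}})^{n+1}$ and $B = (\hat{X}^{\mathcal{P}_K} - \hat{Y}^{\mathcal{P}_K})^{n+1}$. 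Conceptually none of the steps is hard; the main care required is bookkeeping of the exponents of $2$, $\omega(s,t)$, and $1/2^K$ so that they reproduce the stated form exactly.
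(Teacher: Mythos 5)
Your proposal is correct and follows essentially the same strategy as the paper: pass from $\mathcal{P}_{K+1}$ to $\mathcal{P}_K$ by removing the inserted midpoints one at a time via Lemma~\ref{construction of multiplicative extension}, bound the resulting level-$(n+1)$ increments using the hypothesis plus the $\omega$-control, collapse the binomial-type sum with the neo-classical inequality, and use $\omega_j \le \omega(s,t)/2^{K+1}$ together with the count of $2^K$ inserted points to collect the powers of $2$. The only notable difference is cosmetic: you use the two-term decomposition $a\otimes b - c\otimes d = (a-c)\otimes b + c\otimes(b-d)$, whereas the paper substitutes $Y=X+R$ and keeps three terms $R\otimes X + X\otimes R + R\otimes R$ (bounding the last via $\|R\|\le\|X\|+\|Y\|$), which is why the paper's constant has the factor $2^{2}$ where your computation gives $2^{1}$; as you correctly observe, your version lands under the stated bound with a factor of $2$ to spare. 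Also, the paper applies the triangle inequality at each point-removal step rather than writing out the full telescoping identity and then using the reverse triangle inequality as you do, but these are equivalent. Everything checks out.
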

\begin{proof}
For any partition $\mathcal{P} = \{s = u_{0} < \cdots < u_{j} < \cdots < u_{N} = t\}$, let $\mathcal{P}^{j}$ be the partition with the point $u_{j}$ removed from $\mathcal{P}$. By Lemma \ref{construction of multiplicative extension}, we have
\begin{align*}
&\phantom{11}(\hat{Y}_{s,t}^{\mathcal{P}} - \hat{Y}_{s,t}^{\mathcal{P}^{j}})^{n+1} - (\hat{X}_{s,t}^{\mathcal{P}} - \hat{X}_{s,t}^{\mathcal{P}^{j}})^{n+1} \\
&= \sum_{k=1}^{n}(R_{u_{j-1}, u_{j}}^{k} \otimes X_{u_{j}, u_{j+1}}^{n+1-k} + X_{u_{j-1}, u_{j}}^{k} \otimes R_{u_{j}, u_{j+1}}^{n+1-k} + R_{u_{j-1}, u_{j}}^{k} \otimes R_{u_{j}, u_{j+1}}^{n+1-k}), 
\end{align*}
where
\begin{align*}
R_{s,t} = Y_{s,t} - X_{s,t}. 
\end{align*}
Thus, we have
\begin{align*}
\left\|  (\hat{X}_{s,t}^{\mathcal{P}} - \hat{Y}_{s,t}^{\mathcal{P}})^{n+1}  \right\|  &=  \left\|  (\hat{X}_{s,t}^{\mathcal{P}^{j}} - \hat{Y}_{s,t}^{\mathcal{P}^{j}})^{n+1}  +  (\hat{X}_{s,t}^{\mathcal{P}} - \hat{X}_{s,t}^{\mathcal{P}^{j}})^{n+1}  -  (\hat{Y}_{s,t}^{\mathcal{P}} - \hat{Y}_{s,t}^{\mathcal{P}^{j}})^{n+1}     \right\| \\
&\leq \left\|  (\hat{X}_{s,t}^{\mathcal{P}^{j}} - \hat{Y}_{s,t}^{\mathcal{P}^{j}})^{n+1}  \right\|  +  \sum_{k=1}^{n} \bigg(    \left\|  R_{u_{j-1}, u_{j}}^{k} \otimes X_{u_{j}, u_{j+1}}^{n+1-k}  \right\|  \\
& + \left\|  X_{u_{j-1}, u_{j}}^{k} \otimes R_{u_{j}, u_{j+1}}^{n+1-k}  \right\|  +  \left\|  R_{u_{j-1}, u_{j}}^{k} \otimes R_{u_{j}, u_{j+1}}^{n+1-k}  \right\|    \bigg). 
\end{align*}
If $\mathcal{P} = \mathcal{P}_{K+1}$ and $u_{j} \in \mathcal{P}_{K+1} - \mathcal{P}_{K}$, then
\begin{align*}
\omega(u_{j-1}, u_{j}) = \omega(u_{j}, u_{j+1}) \leq \frac{1}{2^{K+1}} \omega(s,t). 
\end{align*}
Thus, for the first term in the above bracket, we have
\begin{align*}
\sum_{k=1}^{n} \left\|  R_{u_{j-1}, u_{j}}^{k} \otimes X_{u_{j}, u_{j+1}}^{n+1-k}  \right\|  &\leq \sum_{k=1}^{n} \epsilon \cdot \frac{\omega(u_{j-1}, u_{j})^{\frac{k - \delta}{p}}}{\beta (\frac{k}{p})!} \cdot \frac{\omega(u_{j}, u_{j+1})^{\frac{n+1-k}{p}}}{\beta (\frac{n+1-k}{p})!} \\
&\leq \frac{\epsilon}{\beta^{2}} \big[\frac{1}{2^{K+1}} \omega(s,t) \big]^{\frac{n+1-\delta}{p}} \sum_{k=1}^{n+1} \frac{1}{(\frac{k}{p})!(\frac{n+1-k}{p})!} \\
&\leq \frac{\epsilon p}{\beta^{2} (\frac{n+1}{p})!} \cdot 2^{\frac{\delta}{p}} \big(\frac{1}{2^{K}} \big)^{\frac{n+1-\delta}{p}} \omega(s,t)^{\frac{n+1-\delta}{p}}. 
\end{align*}
The same bound holds for the second term. For the third term, note that $\left\| R\right\|  \leq \left\| X \right\|  + \left\| Y \right\|$, thus twice of the previous bound works. By combining bounds for the three terms above, we have
\begin{align*}
\left\|  (\hat{X}_{s,t}^{\mathcal{P}_{K+1}} - \hat{Y}_{s,t}^{\mathcal{P}_{K+1}})^{n+1}  \right\| \leq  \left\|  (\hat{X}_{s,t}^{\mathcal{P}_{K+1}^{j}} - \hat{Y}_{s,t}^{\mathcal{P}_{K+1}^{j}})^{n+1}  \right\| + \frac{\epsilon p \cdot 2^{\frac{2p+\delta}{p}}}{\beta^{2} (\frac{n+1}{p})!}  \bigg[\frac{1}{2^{K}} \omega(s,t) \bigg]^{\frac{n+1-\delta}{p}}, 
\end{align*}
where $u_{j}$ is any point in $\mathcal{P}_{K+1} - \mathcal{P}_{K}$. By successively dropping the $2^{K}$ points in $\mathcal{P}_{K+1} - \mathcal{P}_{K}$, we have
\begin{align} \label{the first bound}
\left\|  (\hat{X}_{s,t}^{\mathcal{P}_{K+1}} - \hat{Y}_{s,t}^{\mathcal{P}_{K+1}})^{n+1}  \right\| \leq \left\|  (\hat{X}_{s,t}^{\mathcal{P}_{K}} - \hat{Y}_{s,t}^{\mathcal{P}_{K}})^{n+1}  \right\| + \frac{\epsilon p \cdot 2^{\frac{2p+\delta}{p}}}{\beta^{2} (\frac{n+1}{p})!} \big(\frac{1}{2^{K}} \big)^{\frac{n+1-p-\delta}{p}} \omega(s,t)^{\frac{n+1-\delta}{p}}. 
\end{align}
On the other hand, we have the bound
\begin{align*}
\left\|  (\hat{X}_{s,t}^{\mathcal{P}_{K+1}} - \hat{X}_{s,t}^{\mathcal{P}_{K+1}^{j}})^{n+1}   \right\| &\leq \sum_{k=1}^{n} \left\|  X_{u_{j-1}, u_{j}}^{k} \otimes X_{u_{j}, u_{j+1}}^{n+1-k}  \right\| \\
&\leq \frac{p}{\beta^{2} (\frac{n+1}{p})!} \big[  \frac{1}{2^{K}} \omega(s,t) \big]^{\frac{n+1}{p}}, 
\end{align*}
and the same bound holds for $Y$. Thus, we have
\begin{align*}
\left\| (\hat{X}_{s,t}^{\mathcal{P}_{K+1}} - \hat{Y}_{s,t}^{\mathcal{P}_{K+1}})^{n+1} \right\| \leq \left\| (\hat{X}_{s,t}^{\mathcal{P}_{K+1}^{j}} - \hat{Y}_{s,t}^{\mathcal{P}_{K+1}^{j}})^{n+1} \right\| + \frac{2p}{\beta^{2} (\frac{n+1}{p})!} \big[  \frac{1}{2^{K}} \omega(s,t) \big]^{\frac{n+1}{p}}. 
\end{align*}
Again, by successively dropping the $2^{K}$ points in $\mathcal{P}_{K+1} - \mathcal{P}_{K}$, we get
\begin{align} \label{the second bound}
\left\| (\hat{X}_{s,t}^{\mathcal{P}_{K+1}} - \hat{Y}_{s,t}^{\mathcal{P}_{K+1}})^{n+1} \right\| \leq \left\| (\hat{X}_{s,t}^{\mathcal{P}_{K}} - \hat{Y}_{s,t}^{\mathcal{P}_{K}})^{n+1} \right\| + \big( \frac{1}{2^{K}} \big)^{\frac{n+1}{p}-1} \frac{2p \omega(s,t)^{\frac{n+1}{p}}}{\beta^{2} (\frac{n+1}{p})!}. 
\end{align}
Combining \eqref{the first bound} and \eqref{the second bound}, we conclude the lemma. 

\end{proof}

\bigskip

\section{Proof of Theorem \ref{main theorem}}

\begin{proof}

We fix $s < t$. If $\epsilon \geq 2 \omega(s,t)^{\frac{\delta}{p}}$, then the assumption \eqref{condition for the control} on the $p$-variation of two paths automatically implies the theorem. So we may assume without loss of generality that $\epsilon < 2 \omega(s,t)^{\frac{\delta}{p}}$. In what follows, we let $N$ to be the unique integer such that
\begin{align} \label{the unique integer}
[\frac{1}{2^{N}} \omega(s,t)]^{\frac{\delta}{p}} \leq \frac{\epsilon}{2} < [\frac{1}{2^{N-1}} \omega(s,t)]^{\frac{\delta}{p}}. 
\end{align}
The idea is that, the integer $N$ above is the borderline where we switch from the 'uniform' distance assumption \eqref{uniform distance assumption} to the $p$-variation bound condition \eqref{condition for the control}. More precisely, since $X$ and $Y$ satisfy the assumptions of Lemma \ref{main lemma}, let $n + 1 = \lfl p \rfl + 1$, then $\forall K \leq N - 1$, we have
\begin{align} \label{first bound in the lemma}
\left\|  (\hat{X}_{s,t}^{\mathcal{P}_{K+1}} - \hat{Y}_{s,t}^{\mathcal{P}_{K+1}})^{n+1} \right\| - \left\| (\hat{X}_{s,t}^{\mathcal{P}_{K}} - \hat{Y}_{s,t}^{\mathcal{P}_{K}})^{n+1} \right\| \leq \frac{\epsilon p \cdot 2^{\frac{2p+\delta}{p}}}{\beta^{2} (\frac{n+1}{p})!}  \big(\frac{1}{2^{K}} \big)^{\frac{1-\{p\}-\delta}{p}} \omega(s,t)^{\frac{n+1-\delta}{p}}, 
\end{align}
and for all $K \geq N$, we have
\begin{align} \label{second bound in the lemma}
\left\|  (\hat{X}_{s,t}^{\mathcal{P}_{K+1}} - \hat{Y}_{s,t}^{\mathcal{P}_{K+1}})^{n+1} \right\| - \left\| (\hat{X}_{s,t}^{\mathcal{P}_{K}} - \hat{Y}_{s,t}^{\mathcal{P}_{K}})^{n+1} \right\| \leq \big( \frac{1}{2^{K}} \big)^{\frac{n+1}{p}-1} \frac{2p \omega(s,t)^{\frac{n+1}{p}}}{\beta^{2} (\frac{n+1}{p})!}. 
\end{align}
Now we proceed to prove the three cases in Theorem \ref{main theorem}.

\bigskip

\begin{flushleft}
\textbf{Case 1.} $\delta \in [0, 1 - \{p\})$. 
\end{flushleft}

\bigskip

We need to prove \eqref{first situation} for all integers $k$. In this situation, since the exponent for the level $\lfl p \rfl + 1$ is expected to be
\begin{align*}
\frac{\lfl p \rfl + 1 - \delta}{p} > 1, 
\end{align*}
then we can use the bound \eqref{first bound in the lemma} for all $K$, and we prove \eqref{first situation} directly by induction. Suppose \eqref{first situation} holds for $k = 1, 2, \cdots, n$, where $n \geq \lfl p \rfl$, then \eqref{first bound in the lemma} implies
\begin{align*}
\left\| (\hat{X}_{s,t}^{\mathcal{P}_{K+1}} - \hat{Y}_{s,t}^{\mathcal{P}_{K+1}})^{n+1} \right\| \leq \left\| (\hat{X}_{s,t}^{\mathcal{P}_{K}} - \hat{Y}_{s,t}^{\mathcal{P}_{K}})^{n+1} \right\| + \frac{\epsilon p \cdot 2^{\frac{2p+\delta}{p}}}{\beta^{2} (\frac{n+1}{p})!} \big( \frac{1}{2^{K}} \big)^{\frac{n+1-p-\delta}{p}} \omega(s,t)^{\frac{n+1-\delta}{p}}. 
\end{align*}
Also, by the construction of multiplicative functionals (Lemma \ref{construction of multiplicative extension}), we have
\begin{align*}
\left\| X_{s,t}^{n+1} - Y_{s,t}^{n+1}  \right\| &= \sum_{K=0}^{+\infty} \bigg( \left\| (\hat{X}_{s,t}^{\mathcal{P}_{K+1}} - \hat{Y}_{s,t}^{\mathcal{P}_{K+1}})^{n+1} \right\| -  \left\| (\hat{X}_{s,t}^{\mathcal{P}_{K}} - \hat{Y}_{s,t}^{\mathcal{P}_{K}})^{n+1} \right\|  \bigg) \\
&\leq \epsilon \cdot \frac{2^{\frac{2p+\delta}{p}} p}{\beta^{2} (\frac{n+1}{p})!} \omega(s,t) \sum_{K=0}^{+\infty} \big( \frac{1}{2^{K}} \big)^{\frac{n+1-p-\delta}{p}}\\
&\leq \epsilon \cdot \frac{\omega(s,t)^{\frac{n+1-\delta}{p}}}{\beta (\frac{n+1}{p})!}, 
\end{align*}
where the last inequality holds because $n+1-p-\delta > 0$ and $\beta$ satisfies the assumption \eqref{first condition}.

\bigskip

\begin{flushleft}
\textbf{Case 2.} $\delta = 1 - \{p\}$. 
\end{flushleft}

\bigskip

In this case, we need to prove \eqref{second situation} for all $k \geq \lfl p \rfl + 1$. We first prove it for level $\lfl p \rfl + 1$, and after that we can do induction. Note that by the definition of $\hat{X}$ and $\hat{Y}$ at the beginning of Section 3, we have
\begin{align*}
\left\| (\hat{X}_{s,t}^{\mathcal{P}_{N}} - \hat{Y}_{s,t}^{\mathcal{P}_{N}})^{n+1}  \right\| &= \sum_{K=0}^{N-1} \bigg( \left\| (\hat{X}_{s,t}^{\mathcal{P}_{K+1}} - \hat{Y}_{s,t}^{\mathcal{P}_{K+1}})^{n+1}  \right\|  -  \left\| (\hat{X}_{s,t}^{\mathcal{P}_{K}} - \hat{Y}_{s,t}^{\mathcal{P}_{K}})^{n+1}  \right\| \bigg). 
\end{align*}
Now we let $n + 1 = \lfl p \rfl + 1$. Note that $K \leq N-1$ in the above expression, so we can apply the bound \eqref{first bound in the lemma}, and get
\begin{align*}
\left\| (\hat{X}_{s,t}^{\mathcal{P}_{N}} - \hat{Y}_{s,t}^{\mathcal{P}_{N}})^{n+1}  \right\| \leq \frac{\epsilon p N}{\beta^{2} (\frac{n+1}{p})!} \cdot 2^{\frac{2p+1-\{p\}}{p}} \cdot \omega(s,t). 
\end{align*}
Since $2^{N-1} < 2^{\frac{p}{1-\{p\}}}\omega(s,t) / \epsilon^{\frac{p}{1-\{p\}}}$ by \eqref{the unique integer}, we have
\begin{align} \label{second situation comparison bound}
\left\| (\hat{X}_{s,t}^{\mathcal{P}_{N}} - \hat{Y}_{s,t}^{\mathcal{P}_{N}})^{n+1}  \right\| \leq p \cdot 2^{\frac{2p+1-\{p\}}{p}} \cdot \epsilon \bigg(1 + \frac{p}{1-\{p\}} + \log_{2}\frac{\omega(s,t)}{\epsilon^{(1-\{p\})/p}} \bigg) \frac{\omega(s,t)}{\beta^{2} (\frac{n+1}{p})!}. 
\end{align}
On the other hand, for $K \geq N$, the bound \eqref{second bound in the lemma} implies
\begin{align*}
\sum_{K=N}^{+\infty} \bigg( \left\|  (\hat{X}_{s,t}^{\mathcal{P}_{K+1}} - \hat{Y}_{s,t}^{\mathcal{P}_{K+1}})^{n+1} \right\|  -  \left\|  (\hat{X}_{s,t}^{\mathcal{P}_{K}} - \hat{Y}_{s,t}^{\mathcal{P}_{K}})^{n} \right\| \bigg) \leq \frac{2p \omega(s,t)^{\frac{n+1}{p}}}{\beta^{2} (\frac{n+1}{p})!} \sum_{K=N}^{+\infty}(\frac{1}{2^{K}})^{\frac{1 - \{p\}}{p}}. 
\end{align*}
Using $\frac{1}{2^{N}} \leq (\frac{\epsilon}{2})^{\frac{p}{1 - \{p\}}} / \omega(s,t)$ (by \eqref{the unique integer}), we have
\begin{align} \label{second situation natural bound}
\sum_{K=N}^{+\infty} \bigg( \left\|  (\hat{X}_{s,t}^{\mathcal{P}_{K+1}} - \hat{Y}_{s,t}^{\mathcal{P}_{K+1}})^{n} \right\|  -  \left\|  (\hat{X}_{s,t}^{\mathcal{P}_{K}} - \hat{Y}_{s,t}^{\mathcal{P}_{K}})^{n} \right\| \bigg) \leq \frac{2p \epsilon}{ 1 - (\frac{1}{2})^{\frac{1 - \{p\}}{p}}} \cdot \frac{\omega(s,t)}{\beta^{2} (\frac{n+1}{p})!}. 
\end{align}
Since $\beta$ satisfies the hypothesis \eqref{second condition}, by combining the two bounds \eqref{second situation comparison bound} and \eqref{second situation natural bound} above, we get
\begin{align*}
\left\| X_{s,t}^{n} - Y_{s,t}^{n}  \right\| &= \left\|  (\hat{X}_{s,t}^{\mathcal{P}_{N}} - \hat{Y}_{s,t}^{\mathcal{P}_{N}})^{n} \right\| + \sum_{K=N}^{+\infty} \bigg( \left\|  (\hat{X}_{s,t}^{\mathcal{P}_{K+1}} - \hat{Y}_{s,t}^{\mathcal{P}_{K+1}})^{n} \right\|  -  \left\|  (\hat{X}_{s,t}^{\mathcal{P}_{K}} - \hat{Y}_{s,t}^{\mathcal{P}_{K}})^{n} \right\| \bigg) \\
&< \epsilon \bigg(1 + \frac{p}{1-\{p\}} + \log_{2}\frac{\omega(s,t)}{\epsilon^{(1-\{p\})/p}} \bigg) \frac{\omega(s,t)}{\beta (\frac{n+1}{p})!}. 
\end{align*}
Replacing $\log_{2} \omega(s,t)$ by $\log_{2} \omega(0,1)$, we have proved \eqref{second situation} for level $\lfl p \rfl + 1$. The remaining can be proved by induction. Suppose \eqref{second situation} holds for $k = \lfl p \rfl + 1, \cdots, n$, then by breaking the sum into parts $1, \cdots \lfl p \rfl$ and $\lfl p \rfl + 1, \cdots n$, we have
\begin{align*}
\sum_{k=1}^{n} \left\| R_{u_{j-1}, u_{j}}^{k} \otimes X_{u_{j}, u_{j+1}}^{n+1-k}  \right\| < \frac{\epsilon p \cdot 2^{\frac{1 - \{p\}}{p}}}{\beta^{2} (\frac{n+1}{p})!} \bigg(1 + \frac{p}{1-\{p\}} + \log_{2}\frac{\omega(s,t)}{\epsilon^{(1-\{p\})/p}} \bigg) [\frac{1}{2^{K}} \omega(s,t)]^{\frac{n + \{p\}}{p}}, 
\end{align*}
and similar bounds hold for $\sum_{k=1}^{n} \left\| X_{u_{j-1}, u_{j}}^{k} \otimes R_{u_{j}, u_{j+1}}^{n+1-k} \right\|$ and $\sum_{k=1}^{n} \left\| R_{u_{j-1}, u_{j}}^{k} \otimes R_{u_{j}, u_{j+1}}^{n+1-k} \right\|$. Thus, same as before, by successively dropping the $2^{K}$ points in $\mathcal{P}_{K+1} - \mathcal{P}_{K}$, we get
\begin{align*}
&\phantom{111}\left\| (\hat{X}_{s,t}^{\mathcal{P}_{K+1}} - \hat{Y}_{s,t}^{\mathcal{P}_{K+1}} )^{n+1} \right\| - \left\| (\hat{X}_{s,t}^{\mathcal{P}_{K}} - \hat{Y}_{s,t}^{\mathcal{P}_{K}} )^{n+1} \right\| \\
&\leq \frac{4 \epsilon p \cdot 2^{\frac{1 - \{p\}}{p}}}{\beta^{2} (\frac{n+1}{p})!} \bigg(1 + \frac{p}{1-\{p\}} + \log_{2}\frac{\omega(0,1)}{\epsilon^{(1-\{p\})/p}} \bigg) \omega(s,t)^{\frac{n+ \{p\}}{p}} (\frac{1}{2^{K}})^{\frac{n - \lfl p \rfl}{p}}. 
\end{align*}
Since now the exponent $\frac{n - \lfl p \rfl}{p} > 0$, by summing over $K$ from $0$ to $+\infty$, we conclude that
\begin{align*}
\left\| X_{s,t}^{n+1} - Y_{s,t}^{n+1}  \right\| < \epsilon \bigg(1 + \frac{p}{1-\{p\}} + \log_{2}\frac{\omega(0,1)}{\epsilon^{(1-\{p\})/p}} \bigg) \frac{\omega(s,t)^{\frac{n + \{p\}}{p}}}{\beta (\frac{n+1}{p})!}, 
\end{align*}
as long as $\beta > 4p \cdot 2^{\frac{1 - \{p\}}{p}} / [1 - (\frac{1}{2})^{\frac{1}{p}}]$, which clearly satisfies \eqref{second condition}. Thus, we have completed the induction, and proved \eqref{second situation} for all $k \geq \lfl p \rfl$.

\bigskip

\begin{flushleft}
\textbf{Case 3.} $\delta \in (1 - \{p\}, 1]$. 
\end{flushleft}

This is possible only if $p$ is non-integer. We need to prove \eqref{third situation} for all $k \geq \lfl p \rfl + 1$. The proof is essentially the same to that for the case $\delta = 1 - \{p\}$. Similar as before, using the bound \eqref{first bound in the lemma} for $K \leq N-1$, we get
\begin{align*}
\left\| (\hat{X}_{s,t}^{\mathcal{P}_{N}} - \hat{Y}_{s,t}^{\mathcal{P}_{N}})^{n+1}  \right\| &\leq \frac{\epsilon p}{\beta^{2} (\frac{n+1}{p})!} \cdot 2^{\frac{2p+\delta}{p}} \cdot \omega(s,t)^{\frac{n + 1 - \delta}{p}} \sum_{K=0}^{N-1} 2^{\frac{K}{p}(\delta + \{p\} -1)} \\
&\leq \frac{\epsilon p}{\beta^{2} (\frac{n+1}{p})!} \cdot [2^{\frac{2p+\delta}{p}} / (2^{\frac{\delta - 1 + \{p\}}{p}} - 1)] \cdot \omega(s,t)^{\frac{n + 1 - \delta}{p}} \cdot 2^{\frac{N}{p}(\delta - 1 + \{p\})}. 
\end{align*}
Since $N$ satisfies \eqref{the unique integer} (we use the second inequality here), we have
\begin{align} \label{third situation comparison bound}
\left\| (\hat{X}_{s,t}^{\mathcal{P}_{N}} - \hat{Y}_{s,t}^{\mathcal{P}_{N}})^{n+1}  \right\| \leq \frac{p}{\beta^{2} (\frac{n+1}{p})!} \cdot [2^{\frac{3p+\delta}{p}} / 1 - (\frac{1}{2})^{\frac{\delta - 1 + \{p\}}{p}} ] \cdot \epsilon^{\frac{1 - \{p\}}{\delta}} \omega(s,t). 
\end{align}
On the other hand, similar to the previous case, using the bound \eqref{second bound in the lemma} for $K \geq N$, we have
\begin{align*}
\sum_{K=N}^{+\infty} \bigg( \left\|  (\hat{X}_{s,t}^{\mathcal{P}_{K+1}} - \hat{Y}_{s,t}^{\mathcal{P}_{K+1}})^{n+1} \right\|  -  \left\|  (\hat{X}_{s,t}^{\mathcal{P}_{K}} - \hat{Y}_{s,t}^{\mathcal{P}_{K}})^{n+1} \right\| \bigg) \leq \frac{2p \omega(s,t)^{\frac{n+1}{p}}}{\beta^{2} (\frac{n+1}{p})!} \sum_{K=N}^{+\infty}(\frac{1}{2^{K}})^{\frac{1 - \{p\}}{p}}. 
\end{align*}
Now, since $N$ satisfies the first inequality in \eqref{the unique integer}, we have
\begin{align} \label{third situation natural bound}
\sum_{K=N}^{+\infty} \bigg( \left\|  (\hat{X}_{s,t}^{\mathcal{P}_{K+1}} - \hat{Y}_{s,t}^{\mathcal{P}_{K+1}})^{n+1} \right\|  -  \left\|  (\hat{X}_{s,t}^{\mathcal{P}_{K}} - \hat{Y}_{s,t}^{\mathcal{P}_{K}})^{n+1} \right\| \bigg) \leq \frac{2p\epsilon^{\frac{1 - \{p\}}{\delta}}}{ 1 - (\frac{1}{2})^{\frac{1 - \{p\}}{p}}}  \cdot \frac{\omega(s,t)}{\beta^{2} (\frac{n+1}{p})!}. 
\end{align}
Since $\beta$ satisfies the hypothesis \eqref{third condition}, combining the bounds \eqref{third situation comparison bound} and \eqref{third situation natural bound}, we get
\begin{align*}
\left\| X_{s,t}^{n+1} - Y_{s,t}^{n+1}  \right\| < \epsilon^{\frac{1 - \{p\}}{\delta}} \cdot \frac{\omega(s,t)^{\frac{n+\{p\}}{p}}}{\beta (\frac{n+1}{p})!}, 
\end{align*}
where $n = \lfl p \rfl + 1$. For $k \geq \lfl p \rfl + 2$, we can apply the same induction procedure as in the previous case. Thus, we prove \eqref{third situation} for all $k \geq \lfl p \rfl + 1$.

\end{proof}

\bigskip

\section{An application}

We now explain briefly how our estimates apply to the problem mentioned in the introduction (see equations \eqref{the first equation} and \eqref{the second equation} and assumption \eqref{uniformly close}). Since $A$ is a linear map, the solutions can be written as
\begin{align*}
x_{t} = \sum_{n=0}^{+\infty} A^{*n}x_{0} \int_{0 < u_{1} < \cdots < u_{n} < t} d\gamma_{u_{1}} \cdots d\gamma_{u_{t}}, 
\end{align*}
and
\begin{align*}
y_{t} = \sum_{n=0}^{+\infty} A^{*n}y_{0} \int_{0 < u_{1} < \cdots < u_{n} < t} d\tilde{\gamma}_{u_{1}} \cdots d\tilde{\gamma}_{u_{t}}, 
\end{align*}
Let $X$ and $Y$ denote the signatures of $\gamma$ and $\tilde{\gamma}$, then Theorem \ref{main theorem} implies that
\begin{align} \label{signatures are close}
\left\| X_{s,t}^{n} - Y_{s,t}^{n}  \right\| < \epsilon (1 + \log_{2} \frac{C}{\epsilon}) \cdot \frac{\omega(s,t)^{n-1}}{\beta n!}, \qquad \forall n \geq 2, 
\end{align}
where $C \leq \omega(0,1)$ is a generic constant. Let $x_{s,t} = x_{t} - x_{s}$ and $y_{s,t} = y_{t} - y_{s}$, then we have
\begin{align*}
|x_{s,t} - y_{s,t}| &\leq \sum_{n=0}^{+\infty} \left\| A \right\|^{n} \left\| X_{s,t}^{n} - Y_{s,t}^{n} \right\| \\
&\leq \left\| A \right\| \cdot \left\| X_{s,t}^{1} - Y_{s,t}^{1}  \right\| + \sum_{n=2}^{+\infty} \left\| A \right\|^{n} \epsilon (1 + \log_{2} \frac{C}{\epsilon}) \cdot \frac{\omega(s,t)^{n-1}}{\beta (n-1)!} \\
&\leq 2 \left\| A \right\| \min\{\epsilon, \omega(s,t)\} + \epsilon (1 + \log_{2}\frac{C}{\epsilon}) \frac{\left| A \right\|}{\beta}  (e^{\left| A \right\| \omega(s,t)} - 1). 
\end{align*}
In particular, the difference of the two solutions $|x_{t} - y_{t}|$ are of order $\epsilon$ up to a logarithmic correction, uniformly in $t \in [0,1]$.

\bigskip

\bigskip

\end{document}